\definecolor{ForestGreen}{rgb}{0.1,0.6,0.05}
\definecolor{EgyptBlue}{rgb}{0.063,0.1,0.6}
\let\OLDthebibliography\thebibliography
\renewcommand\thebibliography[1]{
	\OLDthebibliography{#1}
	\setlength{\parskip}{1pt}
	\setlength{\itemsep}{1pt plus 0.3ex}
}
\DeclareRobustCommand\nlam{\lambda}
\DeclareRobustCommand\nom{\Omega}
\numberwithin{equation}{section}
\newtheorem{theorem}{Theorem}[section]
\newtheorem{cor}[theorem]{Corollary}
\theoremstyle{definition}
\newtheorem{remark}[theorem]{Remark}
\title[On partially free boundary solutions]{On partially free boundary solutions for elliptic problems with non-Lipschitz nonlinearities}
\author[V.~Bobkov]{V.~Bobkov}
\author[P.~Dr\'abek]{P.~Dr\'abek}
\author[Y.~Ilyasov]{Y.~Ilyasov}
\address[V.~Bobkov, P.~Dr\'abek]{\newline\indent
	Department of Mathematics and NTIS, Faculty of Applied Sciences,
	\newline\indent 
	University of West Bohemia, Univerzitn\'i 8, 301 00 Plze\v{n}, Czech Republic
}
\email{bobkov@kma.zcu.cz, pdrabek@kma.zcu.cz}
\address[Y.~Ilyasov]{\newline\indent
	Institute of Mathematics, Ufa Federal Research Centre, RAS,
	\newline\indent 
	Chernyshevsky str. 112, 450008 Ufa, Russia
	\newline\indent
	Instituto de Matem\'atica e Estat\'istica, Universidade Federal de Goi\'as
	\newline\indent 
	74001-970, Goiania, Brazil
}
\email{ilyasov02@gmail.com}
\thanks{
	V.~Bobkov and P.~Dr\'abek were supported by the grant 18-03253S of the Grant Agency of the Czech Republic. V. Bobkov was also supported by the project LO1506 of the Czech Ministry of Education, Youth and Sports.
	Y.~Ilyasov wishes to thank the University of West Bohemia, where this research was conducted, for the invitation and hospitality.
	The authors would like to thank J.~I.\ D\'iaz and J.\ Hern\'andez for the encouraging and stimulating discussions.
}
\subjclass[2010]{
	58E30,  
	35B50,  
	35B40,  
	35J61,  
	35J67,  
	35N25.  
}
\keywords{non-Lipschitz nonlinearity, compactons, compact support solutions, free boundary solutions}
\begin{document}

\begin{abstract}
	We show that the elliptic equation with a non-Lipschitz right-hand side, $-\Delta u = \lambda |u|^{\beta-1}u - |u|^{\alpha-1} u$ with $\lambda>0$ and $0<\alpha<\beta<1$, considered on a smooth star-shaped domain $\Omega$ subject to zero Dirichlet boundary conditions, might possess a nonnegative ground state solution which violates Hopf's maximum principle only on a nonempty subset $\Gamma$ of the boundary $\partial\Omega$ such that $\Gamma \neq \partial\Omega$.
\end{abstract}	
\maketitle

\section{Introduction and main result}

Let $\Omega \subset \mathbb{R}^N$ be a smooth bounded domain, $N \geq 2$.
Consider the boundary value problem
\begin{equation}\label{D}
\tag*{$\mathcal{P}(\nlam,\nom)$}
\left\{
\begin{aligned}
-\Delta u &= \lambda |u|^{\beta-1}u - |u|^{\alpha-1} u \quad \text{in } \Omega,\\
u &= 0 \quad \text{on } \partial\Omega,
\end{aligned}
\right.
\end{equation}
where $\lambda>0$ and $0< \alpha < \beta < 1$, that is, the nonlinearity in \ref{D} is non-Lipschitz at zero. 
The latter property prevents to conclude that nonnegative solutions of \ref{D} \textit{a priori} obey the strong maximum principle or Hopf's maximum principle (the boundary point lemma). 
In fact, as it follows from \cite{Kaper1} (see also \cite{Diaz,IlyasovEgorov}), there exists $\lambda^*>0$ such that  problem {\renewcommand\nlam{\lambda^*}\ref{D}} possesses  the so-called \textit{free boundary solution} (equivalently, \textit{compact support solution}), which is a nonzero solution $u$ such that 
\begin{equation}\label{eq:zero}
\frac{\partial u}{\partial \nu} = 0 \text{ on } \partial \Omega,
\end{equation}
where $\nu$ is the unit outward normal vector to $\partial \Omega$.

Let us note that a symmetry result of \cite{SZ} (see also \cite{Kaper2}) together with a uniqueness result of \cite{Kaper1} force any connected component of the support of a nonnegative free boundary solution of \ref{D} to be a ball whose radius $R_\lambda$ is uniquely defined by $\lambda$. 
Moreover, $\lambda \mapsto R_\lambda$ is a decreasing function, as it follows from a simple scaling argument. 
Therefore, it is clear that for any $\lambda>\lambda^*$ problem \ref{D} has a continuum of free boundary solutions. 

At the same time, in \cite{HMV} there was proved the existence of $\underline{\lambda}>0$ such that for any $\lambda>\underline{\lambda}$ problem \ref{D} has a positive solution $u$ which \textit{does} satisfy Hopf's maximum principle at every point of the boundary $\partial \Omega$, i.e., 
\begin{equation*}\label{eq:nonzero}
\frac{\partial u}{\partial \nu} < 0 \text{ on } \partial \Omega.
\end{equation*}

The existence of two types of solutions described above naturally leads to the problem of the existence of a complementary class of solutions. Namely, we consider the following question:

\begin{quote}
Is there a solution of problem \ref{D} which violates Hopf's maximum principle only on a part of the boundary $\partial \Omega$?
\end{quote}

The aim of our note is to give an affirmative answer to this question. More precisely, we are interested in finding so-called \textit{partially free boundary solution} of \ref{D}, i.e., a solution $u$ which satisfies
\begin{equation}\label{eq:viol}
\frac{\partial u}{\partial \nu} = 0 \text{ on } \Gamma
\quad \text{and} \quad 
\frac{\partial u}{\partial \nu} \neq 0 \text{ on } \partial \Omega \setminus \Gamma
\end{equation}
for some nonempty subset $\Gamma \subsetneq \partial\Omega$.
Let us state our main result.
\begin{theorem}\label{thm}
	Let $N$, $\alpha$, $\beta$ satisfy 
	\begin{equation}\label{eq:abN}
	N \geq 3,
	\quad 
	0<\alpha<\beta<1,
	\quad
	2(1+\alpha)(1+\beta) - N(1-\alpha)(1-\beta) < 0.
	\end{equation} 	
	Then there exist a bounded, strictly star-shaped (i.e., $(x,\nu) > 0$ on $\partial \Omega$) domain $\Omega \subset \mathbb{R}^N$ of class $C^2$ and a value $\lambda>0$  such that problem \ref{D} possesses a nonnegative partially free boundary ground state solution.
\end{theorem}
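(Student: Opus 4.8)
The plan is to construct a domain $\Omega$ as a "dumbbell" or "peanut"-shaped region built from two balls joined by a thin neck. The key intuition comes from the cited results: by the uniqueness and symmetry theorems, a nonnegative free boundary solution on a ball of radius $R_\lambda$ is uniquely determined, and $\lambda \mapsto R_\lambda$ is decreasing. So I would fix $\lambda$, take a ball $B$ of radius $R_\lambda$ supporting a compact-support solution $u_0$, and then attach to it a protrusion or a second bulge of smaller size. On the ball $B$ the solution should retain the free-boundary behavior ($\partial u/\partial\nu = 0$), giving the set $\Gamma$, while on the protruding part the solution should touch the outer boundary transversally with $\partial u/\partial\nu < 0$, giving $\partial\Omega \setminus \Gamma$.

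Let me think carefully about how to make this rigorous. I would work variationally, seeking a nonnegative ground state as a minimizer of the energy functional
\begin{equation*}
E_\lambda(u) = \frac{1}{2}\int_\Omega |\nabla u|^2\,dx - \frac{\lambda}{\beta+1}\int_\Omega |u|^{\beta+1}\,dx + \frac{1}{\alpha+1}\int_\Omega |u|^{\alpha+1}\,dx
\end{equation*}
over $H_0^1(\Omega)$, possibly constrained to an appropriate Nehari-type manifold to extract a nonzero ground state. The first step is to establish existence of a nonnegative ground state for a suitable $\lambda$ on a suitable $\Omega$; since $0<\alpha<\beta<1$ the functional is bounded below and the nonlinear terms are subcritical and sublinear, so compactness and existence should follow by standard direct-method arguments. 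The role of the condition \eqref{eq:abN} is, I expect, precisely a Pohozaev-type or scaling identity: the quantity $2(1+\alpha)(1+\beta) - N(1-\alpha)(1-\beta)$ is the natural exponent combination arising when one computes $\frac{d}{dt}E_\lambda(u(\cdot/t))$ or integrates a Pohozaev identity against $(x,\nabla u)$. Its negativity should force the energy of the ground state on a ball to lie strictly below (or above) a comparison threshold, which is the mechanism that makes the free boundary touch the boundary on part of $\partial\Omega$ but not all of it.

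The decisive construction, and the part I expect to carry the real content, is designing $\Omega$ so that the ground state is neither globally free-boundary nor globally Hopf. My approach would be a continuity/intermediate-value argument in the shape of $\Omega$: interpolate between a large star-shaped domain (where, by the $\underline{\lambda}$ result, the ground state satisfies Hopf everywhere) and a domain equal to the exact support ball $B_{R_\lambda}$ (where the ground state is the full free-boundary solution with $\partial u/\partial\nu \equiv 0$). Parametrizing a family $\Omega_t$ of strictly star-shaped $C^2$ domains between these two extremes, one tracks the touching set $\Gamma_t = \{x \in \partial\Omega_t : \partial u_t/\partial\nu = 0\}$. At one extreme $\Gamma_t = \emptyset$ and at the other $\Gamma_t = \partial\Omega_t$; by a suitable continuity argument there must be an intermediate $t$ for which $\Gamma_t$ is a nonempty proper subset, which is exactly \eqref{eq:viol}.

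The main obstacle, and where I would spend the most effort, is controlling the solution near the interface where the support meets the boundary, and proving that the partial touching set is genuinely nonempty and proper rather than collapsing to one extreme. This requires (i) a sharp boundary regularity and comparison analysis showing that where the solution's support sits strictly inside $\Omega$ one necessarily has $\partial u/\partial\nu = 0$ by the free-boundary structure, and elsewhere strict transversality; and (ii) ruling out the degenerate scenario in which the touching set jumps discontinuously from empty to the whole boundary as the shape varies. I would handle (ii) by using the strict star-shapedness together with the Pohozaev identity to derive a rigidity statement: if $\partial u/\partial\nu = 0$ on all of $\partial\Omega$ then $\Omega$ must be a ball of radius $R_\lambda$, so for any strictly star-shaped non-ball domain the touching set is proper, while condition \eqref{eq:abN} guarantees via the energy/scaling balance that it can still be made nonempty by choosing $\Omega$ close enough to the critical ball.
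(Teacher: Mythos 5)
Your intuition about the geometry (a ball supporting the compacton with a protrusion attached) points in the right direction, and your guess that strict star-shapedness plus a Pohozaev identity prevents the ground state from being a \emph{full} free boundary solution is essentially the paper's Theorem \ref{thm1} \ref{th1:1}. However, the core of your argument has two genuine gaps. First, the rigidity statement you propose --- ``if $\partial u/\partial \nu = 0$ on all of $\partial\Omega$ then $\Omega$ must be a ball of radius $R_\lambda$'' --- is false: by Theorem \ref{thm1} \ref{th1:2}, at $\lambda = \lambda^*(\Omega)$ the ground state on \emph{any} strictly star-shaped domain is a full free boundary solution, supported on the maximal inscribed ball; more generally, compactons can be placed anywhere inside a large domain. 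The correct statement is parametrized by $\lambda$, not by the shape of $\Omega$: the Pohozaev inequality \eqref{eq:Poh} is strict only for $\lambda > \lambda^*$, and it is this strictness (not a geometric classification) that makes $\Gamma$ a proper subset of $\partial\Omega$. Second, and more seriously, your intermediate-value argument in the space of shapes has no supporting mechanism: the touching set $\Gamma_t$ is a set-valued quantity for which no continuity in $t$ is available, and ruling out the jump from $\Gamma_t = \emptyset$ to $\Gamma_t = \partial\Omega_t$ is precisely the obstacle you acknowledge in (ii) without resolving it. In fact, proving that $\Gamma \neq \emptyset$ for domains in a generic interpolating family is essentially the paper's closing \emph{open conjecture}; an unspecified continuity argument cannot substitute for it. (A minor additional issue: a genuine two-bulge dumbbell is typically not star-shaped, which would invalidate the existence theory and the Pohozaev argument you rely on.)

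What is missing is a concrete mechanism forcing the solution to vanish identically near part of the boundary. The paper's proof supplies exactly this, and it varies $\lambda$ on a \emph{fixed} cleverly chosen domain rather than varying the domain at fixed $\lambda$: take $\Omega$ to be the unit ball with a long thin cylindrical tail attached (smoothed so as to be strictly star-shaped and $C^2$), so that $\lambda^*(\Omega) = \lambda^*(B_1)$. As $\lambda_n \downarrow \lambda^*$, the ground states $u_{\lambda_n}$ converge in $C^1(\overline{\Omega})$ to the compacton $u_{\lambda^*}$ supported in $\overline{B_1}$, hence $u_{\lambda_n} < \delta$ on the tail for large $n$. One then compares $u_{\lambda_n}$ on the tail with an explicit radial, nonincreasing, compactly supported supersolution $v$ (constructed as in the compact support principle of Pucci--Serrin--Zou, equation \eqref{eq:r}); the weak comparison principle gives $u_{\lambda_n} \leq v$, so $u_{\lambda_n} \equiv 0$ on the far part of the tail, producing a nonempty $\Gamma_n$, while \eqref{eq:int>0} guarantees $\Gamma_n \neq \partial\Omega$. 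Without this supersolution-comparison step (or some replacement for it), your construction establishes neither that $\Gamma$ is nonempty nor that the transition happens at any particular member of your family, so the proposal does not yet constitute a proof.
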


Here, under a \textit{ground state solution} of \ref{D} we mean a solution $u$ with the least action property, namely, $E_\lambda(u) \leq E_\lambda(v)$ for any nonzero solution $v$ of \ref{D}, where $E_\lambda$ is the energy functional associated with \ref{D}:
$$
E_\lambda(u) = \frac{1}{2} \int_\Omega |\nabla u|^2 \,dx - \frac{\lambda}{\beta+1} \int_\Omega |u|^{\beta+1} \, dx + \frac{1}{\alpha+1} \int_\Omega |u|^{\alpha+1} \,dx.
$$

\begin{remark}
	The result of \cite{DGM} on the existence of radial sign-changing solutions in combination with the compact support principle \cite{PSZ} implies that problem \ref{D}, considered on an annulus $\Omega = B_{R_2} \setminus \overline{B_{R_1}}$, has for certain values of $R_1$, $R_2$, $R_2>R_1$, and $\lambda$ a radial positive solution $u$ such that 
	$$
	\frac{\partial u}{\partial \nu} = 0 \text{ on } \partial B_{R_2}
	\quad \text{and} \quad 
	\frac{\partial u}{\partial \nu} < 0 \text{ on } \partial B_{R_1}.
	$$
	That is, $u$ is a partially free boundary solution. However, apart from this example, the existence of other partially free boundary solutions of \ref{D}, including the case in which $\Omega$ is a simply connected domain, was not known to the authors.
	
	Here and below, $B_{R} \subset \mathbb{R}^N$ stands for the open ball of radius $R$ centred at the origin. 
\end{remark}

The proof of Theorem \ref{thm} is given in Section \ref{sec:construction}. We provide an explicit construction of the domain with required properties. 
To this end, in Section \ref{sec:prelim}, we recall some auxiliary results based on investigations made in \cite{Diaz}.

\section{Preliminaries}\label{sec:prelim}
Our approach to prove Theorem \ref{thm} relies on the idea of the proof of the compact support principle \cite[Theorem 2]{PSZ} and the following result obtained in \cite{Diaz}.
\begin{theorem}\label{thm1}
	Let $N, \alpha, \beta$ satisfy 	\eqref{eq:abN}.	
	Let $\Omega \subset \mathbb{R}^N$ be a bounded strictly star-shaped domain of class $C^2$. 
	Then there exists $\lambda^*=\lambda^*(\Omega)\in (0,+\infty)$ such that for any $\lambda \geq \lambda^*$ problem \ref{D} has a nonnegative ground state solution $u_\lambda$, whereas for $\lambda < \lambda^*$, \ref{D} has no nonzero solutions. Moreover, 
	\begin{enumerate}[label={\rm(\roman*)}]
		\item\label{th1:1} for any $\lambda > \lambda^*$ there holds
		\begin{equation}\label{eq:int>0}
		\int_{\partial\Omega} \left|\frac{\partial u_\lambda}{\partial \nu}\right|^{2} \left(x, \nu\right) \, ds > 0,
		\end{equation}
		\item\label{th1:2} $u_{\lambda^*}$ is a free boundary solution, $\text{supp}(u_{\lambda^*}) = \overline{B_{R(\Omega)}}$ is a maximal ball inscribed in $\Omega$, and $u_{\lambda^*}$ is radially symmetric with respect to the centre of $B_{R(\Omega)}$,
		\item\label{th1:3} any sequence $\{u_{\lambda_n}\}$, where $\lambda_n \downarrow \lambda^*$, converges (up to a subsequence) strongly in $H_0^1(\Omega)$ to some $u_{\lambda^*}$.
	\end{enumerate}
\end{theorem}
The existence and nonexistence parts of Theorem \ref{thm1} were obtained in \cite[Theorems 1.1]{Diaz}.
The assertion \ref{th1:1} follows from the following inequality for the Pohozaev identity:
\begin{equation}\label{eq:Poh}
P_\lambda(u_\lambda) 
:= E_\lambda(u_\lambda) - \frac{1}{N} \int_\Omega |\nabla u_\lambda|^2 \,dx = -\frac{1}{2N} \int_{\partial\Omega} \left|\frac{\partial u_\lambda}{\partial \nu}\right|^{2} \left(x, \nu\right) \, ds < 0
\end{equation}
for any $\lambda > \lambda^*$, see \cite[Proposition 4.2 and Corollary 5.3]{Diaz}.
The assertion \ref{th1:2} is stated in \cite[Theorem 1.2]{Diaz}, and the assertion \ref{th1:3} follows from \cite[Lemma 8.1]{Diaz}.

\begin{remark}\label{rem:R}
	Theorem \ref{thm1} \ref{th1:2} together with the uniqueness result of \cite{Kaper1} imply that $\lambda^*(\Omega) = \lambda^*(R(\Omega))$. That is, $\lambda^*$ depends only on the radius of a maximal ball inscribed in $\Omega$.
\end{remark}

\begin{remark}
	Let $\lambda>\lambda^*$. Evidently, \eqref{eq:int>0} means that $u_\lambda$ cannot be a free boundary solution. 
	At the same time, \eqref{eq:int>0} does not provide more detailed information about the pointwise behaviour of $\frac{\partial u_\lambda}{\partial \nu}$ on $\partial \Omega$. 
	In particular, it is not known \textit{a priori} whether $u_\lambda$ is either a partially free boundary solution or it satisfies Hopf's maximum principle on the whole of $\partial\Omega$.
\end{remark}

In Section \ref{sec:construction} below, we will also need the following refinement of Theorem \ref{thm1} \ref{th1:3}.
\begin{cor}\label{lem:convergence}
	Under the assumptions of Theorem \ref{thm1}, any sequence $\{u_{\lambda_n}\}$, where $\lambda_n \downarrow \lambda^*$, converges (up to a subsequence) in $C^1(\overline{\Omega})$ to some $u_{\lambda^*}$.
\end{cor}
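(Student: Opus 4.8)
The strategy is to upgrade the strong $H_0^1(\Omega)$ convergence furnished by Theorem~\ref{thm1}~\ref{th1:3} to convergence in $C^1(\overline{\Omega})$ by means of \emph{uniform} elliptic regularity estimates along the sequence. Since $\lambda_n \downarrow \lambda^*$, the sequence $\{\lambda_n\}$ is bounded, and since $\{u_{\lambda_n}\}$ converges strongly in $H_0^1(\Omega)$ it is in particular bounded there, hence bounded in $L^{2^*}(\Omega)$ with $2^* = 2N/(N-2)$. The plan is to first show that $\{u_{\lambda_n}\}$ is uniformly bounded in $C^{1,\gamma}(\overline{\Omega})$ for some $\gamma \in (0,1)$, and then to conclude by compactness.

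For the uniform $C^{1,\gamma}$ bound, write $f_n := \lambda_n |u_{\lambda_n}|^{\beta-1} u_{\lambda_n} - |u_{\lambda_n}|^{\alpha-1} u_{\lambda_n}$ for the right-hand side, so that $-\Delta u_{\lambda_n} = f_n$ in $\Omega$ with $u_{\lambda_n} = 0$ on $\partial\Omega$. Because $0 < \alpha < \beta < 1$, the map $t \mapsto \lambda |t|^{\beta-1}t - |t|^{\alpha-1}t$ is continuous and of sublinear growth, and one has the pointwise estimate $|f_n| \le \lambda_n |u_{\lambda_n}|^\beta + |u_{\lambda_n}|^\alpha$. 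Starting from the uniform $L^{2^*}$ bound and applying the Calder\'on--Zygmund $W^{2,p}$ estimates for the Dirichlet problem (valid up to the boundary since $\partial\Omega$ is of class $C^2$) together with the Sobolev embeddings, I would run a bootstrap: if $\{u_{\lambda_n}\}$ is bounded in $L^m(\Omega)$, then $\{f_n\}$ is bounded in $L^{m/\beta}(\Omega)$ (the $\beta$-term being the dominant one as $\alpha<\beta$), hence $\{u_{\lambda_n}\}$ is bounded in $W^{2,m/\beta}(\Omega)$ and thus in $L^{m'}(\Omega)$ with $m' > m$. The gain is strict precisely because $\beta < 1$, so after finitely many steps the integrability exponent crosses $N/2$, which yields a uniform $L^\infty(\Omega)$ bound on $\{u_{\lambda_n}\}$. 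With this bound in hand, $\{f_n\}$ is uniformly bounded in $L^\infty(\Omega)$, so the $W^{2,p}$ estimates give a uniform bound in $W^{2,p}(\Omega)$ for every $p < \infty$; choosing $p > N$ and using $W^{2,p}(\Omega) \hookrightarrow C^{1,\gamma}(\overline{\Omega})$ with $\gamma = 1 - N/p$ produces the desired uniform $C^{1,\gamma}(\overline{\Omega})$ bound.

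Finally, the compact embedding $C^{1,\gamma}(\overline{\Omega}) \hookrightarrow\hookrightarrow C^1(\overline{\Omega})$ allows me to extract a subsequence converging in $C^1(\overline{\Omega})$ to some limit $v$. Since convergence in $C^1(\overline{\Omega})$ on the bounded domain $\Omega$ implies convergence in $H_0^1(\Omega)$, and since, by Theorem~\ref{thm1}~\ref{th1:3}, a further subsequence converges in $H_0^1(\Omega)$ to $u_{\lambda^*}$, uniqueness of the limit forces $v = u_{\lambda^*}$, completing the argument. I expect the only genuinely delicate point to be the bootstrap; there the crucial feature is that $\alpha,\beta$ are strictly less than $1$, so that each iteration strictly improves integrability, and the non-Lipschitz character of the nonlinearity at zero --- which is what obstructs the maximum principle --- plays no role in the regularity argument, the nonlinearity being continuous and sublinear. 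The remaining steps are a standard combination of elliptic estimates and compactness.
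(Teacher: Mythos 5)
Your proposal is correct and follows essentially the same route as the paper: uniform $H_0^1(\Omega)$ bound from Theorem \ref{thm1} \ref{th1:3}, bootstrap to a uniform $L^\infty(\Omega)$ bound, uniform $C^{1,\gamma}(\overline{\Omega})$ bound by elliptic regularity, and Arzel\`a--Ascoli compactness to extract a $C^1(\overline{\Omega})$-convergent subsequence whose limit is identified with the $H_0^1$ limit $u_{\lambda^*}$. The only cosmetic difference is the machinery invoked: the paper cites a ready-made bootstrap lemma (\cite[Lemma 3.2]{DKN}) and Lieberman's $C^{1,\gamma}$ boundary regularity \cite{Lieberman}, whereas you run the bootstrap explicitly via Calder\'on--Zygmund $W^{2,p}$ estimates and Sobolev embedding, which is equally valid here since the operator is the Laplacian, the nonlinearity is sublinear, and $\partial\Omega$ is of class $C^2$.
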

\begin{proof}
	Since $\lambda^* \in (0,+\infty)$ and $u_{\lambda_n} \to u_{\lambda^*}$ strongly in $H_0^{1}(\Omega)$ up to a subsequence, we see from Theorem \ref{thm1} \ref{th1:3} that $\{u_{\lambda_n}\}$ is bounded in $H_0^{1}(\Omega)$. Therefore, applying the standard bootstrap argument (see, e.g., \cite[Lemma 3.2, p.\ 114]{DKN}), we obtain that $\{u_{\lambda_n}\}$ is bounded in $L^\infty(\Omega)$. Thus, the regularity result \cite{Lieberman} implies that $\{u_{\lambda_n}\}$ is bounded in $C^{1,\gamma}(\overline{\Omega})$ for some $\gamma \in (0,1)$. Finally, the Arzel\`a-Ascoli theorem yields $u_{\lambda_n} \to u_{\lambda^*}$ in $C^{1}(\overline{\Omega})$ up to a subsequence. 
\end{proof}

\section{Construction}\label{sec:construction}
We will prove Theorem \ref{thm} in three steps.

\textit{Step 1.} Taking the ball $B_1$ of radius $1$, we define the value $\lambda^* = \lambda^*(1)$, see Remark \ref{rem:R}.
Let us fix any $\lambda_0>\lambda^*$ and $\delta > 0$ such that 
\begin{equation}\label{eq:d}
\frac{s^{\alpha+1}}{\alpha+1} - \frac{\lambda_0 s^{\beta+1}}{\beta+1} > 0
\quad \text{for all } s \in (0,\delta).
\end{equation}
Let us also fix some $l_0 > 1$. 
We are going to show the existence of $l_1 > l_0$ and a supersolution $v$ of {\renewcommand\nlam{\lambda_0}\renewcommand\nom{\mathbb{R}^N \setminus \overline{B_{l_0}}}\ref{D}} which has the following properties:
\begin{enumerate}[label={\rm(\roman*)}]
	\item\label{item:1} $v$ is radial;
	\item\label{item:2} $v$ is nonnegative and nonincreasing;
	\item\label{item:3} $v = \delta$ on $\partial B_{l_0}$;
	\item\label{item:4} $v(x) = 0$ for all $x$ satisfying $|x| \geq l_1$. 	
\end{enumerate}
The existence of such $v$ was obtained in the proof of \cite[Theorem 2]{PSZ} in more general settings.
We repeat some arguments from \cite{PSZ} applied to our particular case, for the sake of completeness. Let us define a constant
$$
C = \frac{1}{\sqrt{2}} \int_0^\delta \left(\frac{s^{\alpha+1}}{\alpha+1} - \frac{\lambda_0 s^{\beta+1}}{\beta+1}\right)^{-\frac{1}{2}} ds.
$$
Note that $C<+\infty$ due to the assumption $0<\alpha<\beta<1$ and the choice of $\delta$.
Consider a function $w = w(r)$ for $r \in [0, C)$ given by the implicit formula
\begin{equation}\label{eq:r}
r = \frac{1}{\sqrt{2}} \int_{w(r)}^\delta \left(\frac{s^{\alpha+1}}{\alpha+1} - \frac{\lambda_0 s^{\beta+1}}{\beta+1}\right)^{-\frac{1}{2}} ds.
\end{equation}
Differentiating this equality, we get 
\begin{equation}\label{eq:w'}
-w'(r) = \sqrt{2}  \left(\frac{w(r)^{\alpha+1}}{\alpha+1} - \frac{\lambda_0 w(r)^{\beta+1}}{\beta+1}\right)^{\frac{1}{2}}
\end{equation}
for all $r \in [0,C)$. Thus, we deduce from \eqref{eq:d}, \eqref{eq:r}, and \eqref{eq:w'} that $w'<0$, $w \in (0, \delta]$,  $w(0)=\delta$, and both $w(r) \to 0$ and $w'(r) \to 0$ as $r \uparrow C$. Moreover, 
\begin{equation}\label{eq:radial}
-w'' = \lambda_0 |w|^{\beta-1} w - |w|^{\alpha-1} w
\end{equation}
on the interval $r \in [0,C)$, as it follows from \cite[Lemma 1 (ii)]{PSZ}.
Recalling that $w(r), w'(r) \to 0$ as $r \uparrow C$, we can extend $w$ by zero for $r \geq C$, and hence $w$ is a solution of \eqref{eq:radial} for all $r \geq 0$.

Let us now define $v(x) = w(|x| - l_0)$ for $x \in \mathbb{R}^N \setminus \overline{B_{l_0}}$. Using \eqref{eq:radial}, we obtain
\begin{equation*}
-\Delta v - \left(\lambda_0 |v|^{\beta-1} v - |v|^{\alpha-1} v\right)
=
-w'' - \frac{N-1}{|x|} w' - \left(\lambda_0 |w|^{\beta-1} w - |w|^{\alpha-1} w\right) 
=
- \frac{N-1}{|x|} w'
\geq 0,
\end{equation*}
since $w' \leq 0$.
Thus, $v$ is a supersolution of {\renewcommand\nlam{\lambda_0}\renewcommand\nom{\mathbb{R}^N \setminus \overline{B_{l_0}}}\ref{D}} with the desired properties \ref{item:1}-\ref{item:4} stated above, where $l_1 = l_0 + C$.
Moreover, since $v$ is nonnegative, we also have 
$$
-\Delta v \geq \lambda_0 |v|^{\beta-1}v - |v|^{\alpha-1}v \geq \lambda|v|^{\beta-1}v - |v|^{\alpha-1}v
\quad \text{for all } \lambda \leq \lambda_0.
$$ 
That is, $v$ is a supersolution of {\renewcommand\nom{\mathbb{R}^N \setminus \overline{B_{l_0}}}\ref{D}} for all $\lambda \leq \lambda_0$.

\textit{Step 2.} Let us fix any $l>l_1$ and define a strictly star-shaped domain $\Omega$ as $\Omega = B_1 \cup C_l \cup V_l$ (see Figure \ref{fig1}), where 
\begin{itemize}
	\item[--] $C_l = D_r \times (0, l)$ is an $N$-dimensional cylinder, $D_r \subset \mathbb{R}^{N-1}$ is the open ball of radius $r \in (0,1)$ centred at the origin;
	\item[--] $V_l$ is an appropriate residual part which smooths the boundary of $B_1 \cup C_l$ in such a way that $\Omega$ is strictly star-shaped and its boundary is of class $C^{2}$.
\end{itemize}

\begin{figure}[ht]
	\centering
	\includegraphics[width=0.8\linewidth]{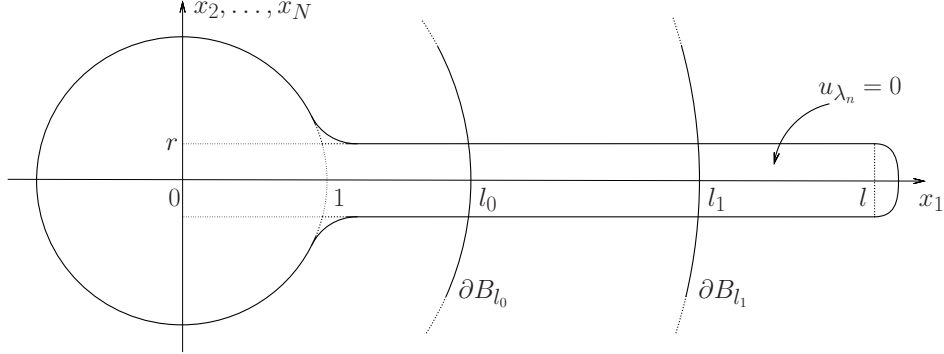}\\
	\caption{The domain $\Omega = B_1 \cup C_l \cup V_l$.}
	\label{fig1}
\end{figure}

Note that, by construction, the radius $R(\Omega)$ of a maximal ball inscribed in $\Omega$ equals $1$. Consequently, $\lambda^*(R(\Omega)) = \lambda^*(1)$ coincides with the value $\lambda^*$ defined above.
Thus, for any $\lambda > \lambda^*$ there exists a corresponding ground state solution $u_{\lambda}$ of {\renewcommand\nom{\Omega}\ref{D}} obtained by Theorem \ref{thm1}.
Now, taking an arbitrary sequence $\lambda_n \downarrow \lambda^*$, we get from Corollary \ref{lem:convergence} that $u_{\lambda_n} \to u_{\lambda^*}$ in $C^1(\overline{\Omega})$ as $n \to +\infty$, up to a subsequence.  
Therefore, recalling that $u_{\lambda^*} = 0$ in $\Omega \setminus B_1$ (see Theorem \ref{thm1} \ref{th1:2}) and $l_0>1$, we deduce the existence of $n_{0} > 0$ such that 
\begin{eqnarray}\label{eq:u<d}
u_{\lambda_n} < \delta~~\mbox{ in } T_{l_0},~~~~\forall n \geq n_{0}.
\end{eqnarray}
Here, $T_{l_0} := \Omega \setminus \overline{B_{l_0}}$ is the ``tail'' of $\Omega$ lying outside of $\overline{B_{l_0}}$. 

\textit{Step 3.} Let us now compare $u_{\lambda_n}$ and the supersolution $v$. From \eqref{eq:u<d} and the property \ref{item:3} of $v$ we have $u_{\lambda_n} \leq \delta = v$ on $\partial T_{l_0} \cap \partial B_{l_0}$ whenever $n \geq n_{0}$. 
Thus, recalling that $u_{\lambda_n}=0$ on $\partial T_{l_0} \setminus \partial B_{l_0}$ and $v$ is nonnegative (see \ref{item:2}), we conclude that  $u_{\lambda_n} \leq v$ on $\partial T_{l_0}$. 
Moreover, since $\lambda_n \downarrow \lambda^*$, we can take $n$ larger, if necessary, to get $\lambda_n \leq \lambda_0$. 
Therefore, in view of \eqref{eq:u<d} and the fact that $v$ is a supersolution of {\renewcommand\nlam{\lambda_n}\renewcommand\nom{T_{l_0}}\ref{D}}, we can apply the weak comparison principle stated in \cite[Lemma 3]{PSZ} to deduce that $u_{\lambda_n} \leq v$ in $T_{l_0}$, which yields $u_{\lambda_n} = 0$ in $\Omega \setminus B_{l_1}$ due to \ref{item:4}. 
In particular, there exists a nonempty subset $\Gamma_n \subsetneq \partial\Omega$ such that $\frac{\partial u_{\lambda_n}}{\partial \nu} = 0$ on $\Gamma_n$. 
Since \eqref{eq:int>0} is satisfied for all $\lambda_n$, we conclude that  
$u_{\lambda_n}$ is a nonnegative partially free boundary ground state solution of {\renewcommand\nlam{\lambda_n}\renewcommand\nom{\Omega}\ref{D}}. This completes the proof of Theorem \ref{thm}.

\medskip
\begin{remark}
	Clearly, the construction of $\Omega$ from above can be substantially generalized. 
	We conjecture that Theorem \ref{thm} holds true for \textit{any} sufficiently smooth bounded strictly star-shaped domain $\Omega$, except maybe the case when $\Omega$ is a ball. 
	More precisely, based on the results of \cite{HMV,An2} and Theorem \ref{thm1}, we conjecture that for any such $\Omega$  nonnegative ground state solutions $u_\lambda$ of \ref{D} have the following behaviour with respect to $\lambda$: there exists $\underline{\lambda}>\lambda^*$ such that 
	\begin{enumerate}[label={\rm(\roman*)}]
		\item $u_\lambda$ is a partially free boundary solution for any $\lambda \in (\lambda^*, \underline{\lambda})$;
		\item $u_{\underline{\lambda}}$ is positive;
		\item $u_\lambda$ is positive and satisfies Hopf's maximum principle on the whole of $\partial \Omega$ for any $\lambda > \underline{\lambda}$.
	\end{enumerate}
	We refer the interested reader to \cite{An1,DHI2} for additional discussions on problem \ref{D}.
\end{remark}

\end{document}